\def\NZQ{\mathbb}               
\def\NN{{\NZQ N}}
\def\RR{{\NZQ R}}
\theoremstyle{plain}
\newtheorem{theorem}{Theorem}[section]
\newtheorem{lemma}[theorem]{Lemma}
\newtheorem{proposition}[theorem]{Proposition}
\theoremstyle{definition}
\newtheorem{definition}[theorem]{Definition}
\newtheorem{example}[theorem]{Example}
\newtheorem{remark}[theorem]{Remark}
\newcommand{\eqnsection}{
\renewcommand{\theequation}{{\thesection.\arabic{equation}}} 
\renewcommand{\con}{\mathrm{conv}}

\makeatletter \csname @addtoreset\endcsname{equation}{section}
\makeatother} 
\begin{document}
\thispagestyle{empty}
\date{}

\title{\textbf{\large ALGORITHMIC RELEASES ON THE\\[1mm] SPANNING TREES OF SUITABLE GRAPHS\\[3mm]}}

\author{\textsc{Maurizio Imbesi}\\[2mm]
\small{\em Department of Mathematical and Computer Sciences, Physical and Earth Sciences}\\
\small{\em University of Messina, Viale F. Stagno d'Alcontres 31, I-98166 Messina, Italy}\\
\small{{\tt e-mail address:} maurizio.imbesi@unime.it}\\[6mm]
\textsc{Monica La\,Barbiera}\\[2mm]
\small{\em Department of Electrical, Electronic and Computer Engineering}\\
\small{\em University of Catania, Viale A. Doria 6, I-95125 Catania, Italy}\\
\small{{\tt e-mail address:} monica.labarbiera@unict.it}\\[6mm]
\textsc{Santo Saraceno}\\[2mm]
\small{\em Department of Mathematical and Computer Sciences, Physical and Earth Sciences}\\
\small{\em University of Messina, Viale F. Stagno d'Alcontres 31, I-98166 Messina, Italy}\\
\small{{\tt e-mail address:} snapmode91@gmail.com}
}

\maketitle
\noindent
\textsc{Abstract.} {\small In this paper algebraic and combinatorial properties and a computation of the number of the spanning trees are developed for certain graphs.\\ To this purpose, an original method, independent of the spectrum of the Laplacian matrix associated to the graph, is discussed. It represents an alternative process to compute how many and which are the spanning trees of any graph and substantially consists in joining the spanning trees on the ground of the number of common edges between the inner cycles of it.\\ The algorithm and its source code for determining the collection of all edge-sets of the spanning trees for the class of Jahangir graphs are displayed.\\ An application involving such graphs in order to get a satisfactory degree of security in transmitting confidential information is given, and finally symmetry properties of them are highlighted.}\\[4mm]
{\small 2020 \emph{Mathematics Subject Classification}. Primary 68R10; Secondary  05C05, 05C85.}\\
{\small \emph{Key Words and Phrases}. Combinatorics; graph theory; spanning trees.}

\section*{Introduction}
Let $G$ be a finite simple connected cyclic graph having vertex set $V(G)$ and edge set $E(G)$. We refer to \cite{{GR},{V}} for a detailed presentation of classical algebraic topics about graph theory. In \cite{{IL3},{I},{IL},{IL1},{IL2}} interesting results about algebraic and combinatorial properties linked to finite graphs can be found. A spanning tree of $G$ is an acyclic connected subgraph of $G$ that contains all the vertices of $G$. Let's denote by $s(G)$ the collection of all edge-sets of the spanning trees of $G$.
An effective analytical method for obtaining systematically all the existing spanning trees of $G$ is the so-called cutting-down method: it consists of removing an appropriate number of edges from the graph for making it acyclic.\par\noindent
The work is devoted in studying an alternative method for the computation of the spanning trees of simple connected cyclic graphs, including for instance those considered in \cite{{ARK},{ARK1}}. Specifically, combinatorial properties of the spanning trees for the class of Jahangir graphs, defined in \cite{LJM}, will be discussed and an algorithmic method to determine how many and what are the spanning trees of such type of graphs will be developed. This provides a general procedure for the calculation of the number $\sigma$ of their spanning trees.
The paper is structured as follows.\par
In Section 1 we introduce fundamental notions on graph theory like simple, connected, cyclic graphs, incidence and adjacency matrices and the Laplacian matrix associated to a graph, see also \cite{Bol}. Moreover, an important theorem, the Matrix Tree Theorem, due to G. Kirchhoff, is stated; the total number of the spanning trees of any graph follows from the nonzero eigenvalues of the associated Laplacian matrix (\cite{{B},{S}}).\par
In Section 2 we analyze Jahangir graphs $J_{n,m}$, namely graphs which consist of $m$ edges $\{v,v_1\},\dots,\{v,v_m\}$, together with $m$ paths, all of length $n$, having endpoints $v_1$,$v_2$; $v_2$,$v_3$; \dots; $v_{m-1}$,$v_m$; $v_m$,$v_1$. Their shape, similar to a flower with a center, was inspired by a drawing carved in the mausoleum of the Indian Grand Mogul Jahangir (1569-1627) located in Lahore, Pakistan.
To find the spanning trees of a Jahangir graph, it is introduced a method independent of the spectrum of the associated Laplacian matrix which substantially consists of joining the spanning trees on the ground of the number of common edges between the inner cycles of it. So, this represents an alternative process to compute how many and what are the spanning trees. The original algorithm and the source code for determining the collection of all edge-sets of the spanning trees for Jahangir graphs $J_{n,m}$ are displayed.\par
In Section 3 an application of sensitive data transmission arising from security real problems is illustrated.\par
In Section 4 we highlight symmetry properties of Jahangir graphs $J_{n,m}$ and consider interesting relationships on the number of spanning trees of those having either same $m$ or same $n$.
Precisely, for equal indices $m$, two distinct Jahangir graphs, with $n$ large enough, tend to have the same number of spanning trees; for equal indices $n$, we show that the ratio between the number of spanning trees of two Jahangir graphs with consecutive indices $m$, tends, with increasing $m$, to a constant $\delta_n$\,, distinct for each choice of $n$.

\medskip
\section{Preliminary notions and classical methods}
Give basic definitions and notations that will be used throughout the paper.\\
A \textit{(finite) graph} $G$ is an ordered triple $(V(G), E(G), f)$ such that $V(G)= \{v_1, \ldots, v_n\}$ is the set of the vertices of $G$, $E(G)=\{\{v_i,v_j\} \ | \ v_i, v_j \in V(G) \}$ the set of edges of $G$ and $f: E(G) \longrightarrow V(G)\times V(G)$ the incidence function.\\
$G$ is said to be \textit{simple} if, for all $\{v_i,v_j\}\in E(G)$, it is $v_i\neq v_j\,$. In other words, a simple graph is an undirect (or a non oriented) graph without loops.\\
A \textit{subgraph} of $G$ is a graph with all of its vertices and edges belonging to $G$.\\
A \textit{spanning subgraph} of $G$ is a subgraph containing all the vertices of $G$.\\
The \emph{degree} of a vertex $v$ of $G$ is the number of edges incident to $v$\,.\\
A \textit{walk} of $G$ of length $q$ is an alternating sequence of $q\!+\!1$ vertices and $q$ edges beginning and ending with vertices in which each edge is incident with the two vertices immediately preceding and following it.\\
A \textit{path} of $G$ is a walk having all the vertices, and thus all the edges, distinct.\\
A graph $G$ is called \textit{connected} if every pair of vertices are joined by a path.\\
A walk of $G$ is said to be \textit{closed} if the defining sequence begins and ends at the same vertex.\\
A closed path of $G$ of length $q\geq 3$ is called \textit{cycle}; in particular, it is a subgraph $C_q$ such that $E(C_q)=\big{\{}\{v_{i_1},
v_{i_2}\},\{v_{i_2}, v_{i_3}\}, \ldots,$ $\{v_{i_{q-1}}, v_{i_q}\},
\{v_{i_q}, v_{i_1}\} \big{\}}$, where $\{ v_{i_1}, \ldots, v_{i_q}\}
\in V(G)$ and $v_{i_j} \neq v_{i_k}$ if $i_j \neq i_k$.\\
A graph which has no cycles is called \textit{acyclic}.\\
A \textit{tree} is a connected acyclic graph. Any graph without cycles is a \textit{forest}, thus the connected subgraphs of a forest are trees.
\begin{definition}\rm{
A \textit{spanning tree} of a simple connected finite graph $G$ is a subtree of $G$ that contains every vertex of $G$.\\
We denote by $s(G)$ the collection of all edge-sets of the spanning trees of $G$:\vspace{-1.5mm}
$$s(G) = \{E(T_i) \subset E(G), \textrm{where} \  T_i \  \textrm{is a spanning tree of} \ G\}.$$
}
\end{definition}
\vspace{-1mm}
\noindent It is well-known that, for any simple finite connected graph, spanning trees always exist. One can systematically find a spanning tree by using the cutting-down method (see \cite{AW}), which in particular says that a spanning tree of a simple finite connected graph can be obtained by removing one edge from each cycle appearing in the graph.\\[.5mm]
We denote by $\sigma(G)$ the number of spanning trees of $G$.
\begin{example}\label{e1}\rm{
Let $G$ be the graph with $V(G)= \{v_1, v_2, v_3, v_4\}$ and
$E(G) = \{e_1, e_2, e_3, e_4\}$
\begin{figure}[htbp]
\begin{center}
   \includegraphics[scale=.85]{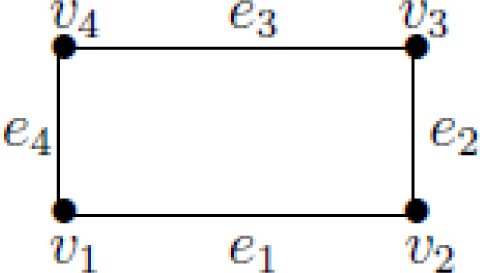}
\end{center}
\end{figure}
where $e_1= \{v_1,v_2\}$, $e_2= \{v_2,v_3\}$, $e_3= \{v_3,v_4\}$, $e_4= \{v_1,v_4\}.$\vspace{-5mm}
By using the cutting-down method for $G$ one obtains: \vspace{-1.5mm}
$$s(G) = \{\{e_2, e_3, e_4\}, \{e_1, e_3, e_4\}, \{e_1, e_2, e_4\}, \{e_1, e_2, e_3\}\}.$$
}
\end{example}
\begin{definition}\rm{
Let $G$ be a simple graph with vertex set $V(G)=\{v_1,\dots, v_n\}$ and edge set $E(G)=\{e_1,\dots, e_p\}$. We call \emph{incidence matrix} $M=(m_{ij}), \, i=1,\dots, n, \, j=1,\dots, p$, associated to $G$ the $n\times p$ matrix such that:\vspace{-1.2mm}
\begin{itemize}
\item $m_{ij} = 1$, if the vertex $\{v_i\}$ meets the edge $\{e_j\}$;\vspace{-1.8mm}
\item $m_{ij} = 0$, if the vertex $\{v_i\}$ is external to the edge $\{e_j\}$.
\end{itemize}
}
\end{definition}
\begin{definition}\rm{
Let $G$ be a graph with vertex set $V(G)=\{v_1,\dots, v_n\}$. We call \emph{adjacency matrix} $A=(a_{ij}), \, i,j=1,\dots, n$, associated to $G$ the $n\times n$ matrix such that:\vspace{-1.2mm}
\begin{itemize}
\item $a_{ij} = 1$, if $\{v_i,v_j\} \in  E(G)$\vspace{-1.8mm}
\item $a_{ij} = 0$, if $\{v_i,v_j\} \notin  E(G)$.
\end{itemize}
}
\end{definition}
\begin{definition}\rm{
Let $G$ be a graph with vertex set $V(G)=\{v_1,\dots, v_n\}$. We call \emph{matrix of degrees} $D=(d_{ij}), \, i,j=1,\dots, n$, associated to $G$ the $n\times n$ matrix such that:\vspace{-1.2mm}
\begin{itemize}
\item $d_{ij} = \deg (v_i)$, if $i=j$\vspace{-1.8mm}
\item $d_{ij} = 0$, if $i \neq j$.
\end{itemize}
}
\end{definition}
\begin{definition}\rm{
Let $G$ be a graph with vertex set $V(G)=\{v_1,\dots, v_n\}$. We call \emph{Laplacian matrix} associated to $G$ the matrix $L = D - A$. }
\end{definition}
\begin{remark}\rm{
If $M$ is the incidence matrix associated to a direct graph $G$, so that the nonzero entries in each column are given by $-1$ and $1$, then the Laplacian matrix $L$ associated to $G$ is the matrix $L=M\,^t\!M$\,.
}
\end{remark}
\noindent An effective theoretical method due to G. Kirchhoff
for determining the spanning trees of any suitable graph is provided in the following
\begin{theorem} [Kirchhoff's Matrix Tree Theorem]
Let $G$ be a connected simple graph with $n$ vertices and associated Laplacian matrix $L$.
If $\sigma(G)$ is the number of the spanning trees of $G$, then\vspace{-1,5mm}
$$\sigma (G) = \frac{\lambda_1\lambda_2 \cdots \lambda_{n-1}}{n},$$
where $\lambda_1,\lambda_2,  \ldots, \lambda_{n-1}$ are the nonzero eigenvalues of $L$.
\end{theorem}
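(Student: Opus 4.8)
The approach I would take rests on the Cauchy–Binet formula applied to the incidence matrix of an orientation of $\mathcal{G}$. First I would fix an arbitrary orientation of the edges of $\mathcal{G}$ and form the oriented incidence matrix $N$ of size $n\times p$, whose column corresponding to an edge $e=\{v_i,v_j\}$ (oriented from $v_i$ to $v_j$) has entry $+1$ in row $i$, $-1$ in row $j$, and $0$ elsewhere. By the Remark preceding the statement, $L=N\,{}^tN$ is the Laplacian, independently of the chosen orientation. Since the rows of $N$ sum to zero, $L$ is singular, so $0$ is an eigenvalue; connectedness gives that the kernel is exactly one-dimensional, hence the other eigenvalues $\lambda_1,\dots,\lambda_{n-1}$ are nonzero. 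The quantity we must compute is therefore $\lambda_1\cdots\lambda_{n-1}$, and I would identify this with $n$ times any cofactor of $L$: deleting row and column $k$ from $L$ yields a reduced Laplacian $L_k$, and a standard linear-algebra computation (diagonalize $L$, use that the all-ones vector spans the kernel) shows $\det L_k = \frac{1}{n}\lambda_1\cdots\lambda_{n-1}$ for every $k$. So it suffices to prove $\det L_k = \sigma(\mathcal{G})$.

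The heart of the argument is then the combinatorial evaluation of $\det L_k$. Let $N_k$ be the matrix obtained from $N$ by deleting row $k$; it is $(n-1)\times p$, and $L_k = N_k\,{}^tN_k$. By the Cauchy–Binet formula,
\[
\det L_k \;=\; \sum_{\substack{S\subseteq E(\mathcal{G})\\ |S|=n-1}} \bigl(\det N_k[S]\bigr)^2,
\]
where $N_k[S]$ is the $(n-1)\times(n-1)$ submatrix of $N_k$ on the columns indexed by $S$. The key lemma is that $\det N_k[S]\in\{-1,0,+1\}$, and that it is $\pm1$ precisely when the edge set $S$ is the edge set of a spanning tree of $\mathcal{G}$, and $0$ otherwise. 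Granting this lemma, each spanning tree contributes exactly $1$ to the sum and nothing else does, so $\det L_k=\sigma(\mathcal{G})$, which combined with the first paragraph gives $\sigma(\mathcal{G})=\frac{1}{n}\lambda_1\cdots\lambda_{n-1}$ as claimed.

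To prove the lemma I would argue in two directions. If $S$ contains a cycle, the corresponding columns of $N$ satisfy a linear dependence (assign $\pm1$ coefficients according to whether each cycle-edge agrees with a fixed traversal direction), so $\det N_k[S]=0$; since a subgraph on $n$ vertices with $n-1$ edges that is not a tree must contain a cycle, non-trees give $0$. If $S$ is a spanning tree, I would induct on $n$: a tree has a leaf, say $v_\ell\neq v_k$, incident to a unique edge of $S$; expanding the determinant along row $\ell$ leaves a single nonzero entry $\pm1$ times the corresponding minor, which is the analogous determinant for the tree with the leaf removed, and the base case $n=1$ is trivial — so $\det N_k[S]=\pm1$. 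The main obstacle, and the only place demanding real care, is this lemma: one must handle the bookkeeping of signs under the chosen orientation and the leaf-deletion, and be precise that a connected spanning subgraph with $n-1$ edges is exactly a spanning tree. Everything else — Cauchy–Binet, the passage from a cofactor to the product of nonzero eigenvalues, the one-dimensionality of $\ker L$ from connectedness — is standard and I would cite or sketch it briefly.
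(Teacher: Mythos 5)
Your proposal is correct: it is the standard proof via the oriented incidence matrix, the Cauchy--Binet formula, and the lemma that an $(n-1)$-column minor of the reduced incidence matrix is $\pm1$ exactly when the chosen edges form a spanning tree, together with the linear-algebra step relating any cofactor of $L$ to $\frac{1}{n}\lambda_1\cdots\lambda_{n-1}$. The paper itself does not give an argument at all: its ``proof'' consists only of the remark that an implicit version appeared in Kirchhoff's 1847 paper \cite{K} and a pointer to \cite{B} for a comprehensive proof, and the cited comprehensive proof is essentially the same Cauchy--Binet argument you sketch. So you are not diverging from the paper so much as supplying the argument it delegates to the literature; the one place to keep your promised care is the sign bookkeeping in the leaf-deletion induction and the cofactor-to-eigenvalue identity, both of which you correctly flag as the nontrivial steps. (Incidentally, the product in the paper's statement should read $\lambda_1\lambda_2\cdots\lambda_{n-1}$; your reading of it as the product of the nonzero eigenvalues is the intended one.)
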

\begin{proof}
An implicit version of the statement first appeared in \cite{K}.\\
See \cite{B} for a comprehensive proof.
\end{proof}

\section{How to compute spanning trees}
In this section we discuss some combinatorial properties of the spanning trees of an interesting class of graphs, the Jahangir graphs.
We will show how to identify, how many and which are the spanning trees of such graphs in a more simple way than the calculation of the eigenvalues of the Laplacian matrix associated to them.
\begin{definition}\rm{
The \textit{Jahangir graph} $J_{n,m}$, for $n\geq 2, m \geq 3$, is a graph on $nm + 1$ vertices consisting of a cycle $C_{nm}$ with one additional vertex which is adjacent to $m$ vertices of $C_{nm}$ at distance $n$ to each other on $C_{nm}$.}
\end{definition}
\begin{remark} \rm{
We can also say that the Jahangir graph $J_{n,m}$ consists of a cycle $C_{nm}$ which is further divided into $m$ cycles of equal length $C_{n+2}$ such that each pair has just one common vertex and consecutive pairs have exactly one common edge.
}
\end{remark}
\begin{lemma}[Cycles in $J_{2,m}$]
Let $J_{2,m}$ be the Jahangir graph consisting of $m$ adjacent cycles and let $C_{(k)}$ be such cycles, $1\leq k \leq m$. Let $\tau_m$ denote the global number of cycles of $J_{2,m}$ and $C_{(i_1,i_2,\ldots,i_k)}$ the cycle obtained by joining the consecutive cycles $C_{(i_1)},C_{(i_2)},\ldots,C_{(i_k)}$. Then $\tau_m = m^2-m+1$ and $\mid C_{(i_1,i_2,\ldots,i_k)} \mid = 2(k+1)\,.$
\end{lemma}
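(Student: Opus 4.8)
The plan is to count the cycles of $J_{2,m}$ by exploiting the ``chain of cycles'' structure described after the definition of $J_{n,m}$. For $n=2$, the graph $J_{2,m}$ is a cycle $C_{2m}$ together with a hub vertex $v_0$ joined to the $m$ vertices of $C_{2m}$ lying at distance $2$ from each other; each elementary cycle $C_{(k)}$ is a $4$-cycle of the form $v_0,\,a_k,\,b_k,\,a_{k+1}$ (indices of the $a$'s taken cyclically modulo $m$), where the $a_k$ are the spoke-endpoints and $b_k$ is the midpoint between $a_k$ and $a_{k+1}$ on $C_{2m}$. Consecutive elementary cycles $C_{(k)}$ and $C_{(k+1)}$ share exactly the edge $\{v_0,a_{k+1}\}$. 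First I would show that every cycle of $J_{2,m}$ is of the form $C_{(i_1,i_2,\ldots,i_k)}$ for a run of consecutive indices: since any cycle either avoids $v_0$ — forcing it to be the outer cycle $C_{2m}=C_{(1,2,\ldots,m)}$ — or passes through $v_0$, in which case it uses exactly two spokes $\{v_0,a_i\}$ and $\{v_0,a_j\}$ and is then forced to follow the arc of $C_{2m}$ between $a_i$ and $a_j$; one of the two arcs gives a genuine cycle and it is precisely the concatenation of the consecutive elementary cycles between those spokes.

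Next I would compute the length. Gluing $k$ consecutive elementary $4$-cycles along $k-1$ shared spoke-edges, the cycle $C_{(i_1,\ldots,i_k)}$ consists of the two ``outer'' spokes at the ends plus, along $C_{2m}$, the $2k$ edges spanning the $k$ consecutive blocks of length $2$; however the two extreme spokes are exactly the two non-shared spokes, so the total is $2$ spokes $+$ $2k$ rim edges minus nothing, giving $|C_{(i_1,\ldots,i_k)}| = 2k+2 = 2(k+1)$, which also correctly specializes to $|C_{(k)}|=4$ and to $|C_{(1,\ldots,m)}| = 2m+2$ versus the plain outer cycle $C_{2m}$ — so I must be slightly careful and note that the outer $2m$-cycle is a genuinely different cycle from the ``full wrap-around'' spoke cycle, and both must be included in the count.

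For the count $\tau = m^2$: a cycle through $v_0$ is determined by an unordered pair of distinct spoke-endpoints $\{a_i,a_j\}$ together with a choice of which of the two arcs of $C_{2m}$ to traverse; but for each such pair exactly one arc, read as a run of consecutive elementary cycles, realizes it without repeating $v_0$, and conversely every nonempty proper run of consecutive indices modulo $m$ arises this way. The number of nonempty runs of consecutive residues mod $m$ of length $k$ with $1\le k\le m-1$ is $m$ for each $k$, giving $m(m-1)$ such cycles; adding the $m$ cycles of the form $C_{(1,2,\ldots,m)}$ read ``the long way'' — equivalently the $m$ rotations that close up through $v_0$ using the full wrap — together with the single outer cycle $C_{2m}$ would overshoot, so the bookkeeping needs care. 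The cleanest route I expect to use is: runs of length $k$ for $1\le k\le m$ number exactly $m$ each (a run of length $m$ being the whole cycle, counted once, not $m$ times), contributing $m\cdot(m-1)+1 = m^2-m+1$ cycles through $v_0$ or equal to $C_{2m}$, and then re-examining whether the distinct ``wrap'' cycles through $v_0$ restore the missing $m-1$; I would reconcile this by direct check for $m=3$ ($\tau$ should be $9$) and $m=4$ to pin down the correct enumeration before writing the general argument.

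The main obstacle is precisely this last enumeration: distinguishing the outer rim cycle $C_{2m}$ from the wrap-around spoke cycles and making sure each cyclic run of consecutive elementary cycles is counted with the right multiplicity, so that the total lands exactly on $m^2$. Once the correspondence ``cycle $\leftrightarrow$ cyclic run of consecutive elementary cycles (plus the one exceptional outer cycle)'' is established rigorously, both $\tau=m^2$ and the length formula $|C_{(i_1,\ldots,i_k)}|=2(k+1)$ follow immediately.
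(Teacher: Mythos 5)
Your structural analysis is right and, for $1\le k\le m-1$, your length computation (two end spokes plus $2k$ rim edges, hence $\mid C_{(i_1,\ldots,i_k)}\mid = 2(k+1)$) is essentially the paper's, which obtains the same number by adding the orders of the $k$ elementary $4$-cycles and subtracting twice the $k-1$ shared spokes. The genuine gap is the enumeration you leave unresolved, and it cannot be resolved in the direction you hope: with the standard notion of cycle as a subgraph (closed path), a cycle of $J_{2,m}$ either avoids the hub, in which case it is the rim $C_{2m}$, or passes through the hub, in which case it is determined by an unordered pair of spokes together with one of the two rim arcs; this gives exactly $m(m-1)$ hub cycles, so the honest total is $m(m-1)+1=m^2-m+1$, and your planned sanity check at $m=3$ returns $7$ cycles, not $9$. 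There are no further ``wrap'' cycles through $v_0$ to restore the missing $m-1$: deleting only $m-1$ of the $m$ shared spokes from the union of all elementary cycles leaves the rim together with a pendant spoke, which is not a cycle, while deleting all $m$ gives the rim, of length $2m$, not $2(m+1)$.

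The paper arrives at $\tau=m^2$ by a different, purely formal convention: it counts index runs rather than subgraphs, taking for every $k$ with $1\le k\le m$ all $m$ cyclically consecutive sequences $(i,\,i+1,\ldots,i+k-1)$ as distinct cycles $C_{(i_1,\ldots,i_k)}$ of order $2(k+1)$; in particular the $m$ rotations of the full run $(1,2,\ldots,m)$ are counted as $m$ distinct cycles of length $2(m+1)$, and the rim is not counted as a separate object. So if you keep the standard graph-theoretic definition, your method proves $\tau=m^2-m+1$ and the length formula only for $1\le k\le m-1$; to obtain the lemma exactly as stated you would have to adopt the paper's sequence-counting convention for $k=m$, and you should state that convention explicitly rather than expecting the extra $m-1$ cycles to exist in the graph.
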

\begin{proof} Let's point out the global number of cycles of the Jahangir graph $J_{2,m}$.
First of all, the $m$ consecutive cycles constituting it, $C_{(1)},C_{(2)},\ldots,C_{(m)}$, all of length 4.
The remaining ones can be obtained by deleting the common edges between cycles in every possible way, namely 1 common edge $m$ times, 2 consecutive common edges $m$ times, and so on. Thus we also have:\\
$C_{(1,2)},C_{(2,3)},\ldots,C_{(m-1,m)}, C_{(m,1)}$,\\[1mm]
$C_{(1,2,3)},C_{(2,3,4)},\ldots,C_{(m-1,m,1)}, C_{(m,1,2)},$\\[1mm]
$C_{(1,2,3,4)},C_{(2,3,4,5)},\ldots,C_{(m-1,m,1,2)}, C_{(m,1,2,3)},$\\
$\ldots\ldots\ldots\ldots,$\\
$C_{(1,2,\ldots,m-1)},C_{(2,3,\ldots,m-1,m)},C_{(3,4,\ldots,m,1)},\ldots, C_{(m-1,m,1,\ldots,m-3)}, C_{(m,1,\ldots,m-2)},$\\[1mm]
$C_{(1,2,3,\ldots,m)}.$\\[2mm]
Summarizing, the cycles of the graph $J_{2,m}$ are of the type $C_{(i_1,i_2,\ldots,i_k)}$, where $i_{j} \in \{1,2,\ldots,m\}$ and $1\leq k \leq m$, such that $i_{j+1}= i_{j}+1$\, if \, $i_{j}\neq m$\, and \, $i_{j+1}=1$\, if \, $i_{j}=m.$\\
So for each $k\leq m-1$, it results that the total number of cycles $C_{(i_1,i_2,\ldots,i_k)}$ is $m$. Hence the global number of cycles in $J_{2,m}$ is just $m(m-1)+1$.\\
In addition, it is clear from the above construction that $C_{(i_1,i_2,\ldots,i_k)}$ is obtained by deleting $k-1$ common edges between the adjacent cycles $C_{(i_1)},C_{(i_2)},$ $\ldots,C_{(i_k)}$.
Therefore the order of cycles $C_{(i_1,i_2,\ldots,i_k)}$ can be determined by adding orders of all $C_{(i_1)},C_{(i_2)},\ldots,C_{(i_k)}$ and subtracting $2(k-1)$ from the sum, since the common edges are being counted twice; this implies that $\mid C_{(i_1,i_2,\ldots,i_k)} \mid= \sum_{t=1}^k\mid C_{(i_t)}\mid -2(k-1)=2(k+1).$
\end{proof}

\noindent Now we intend to present an algorithm for enumerating and illustrating explicitly all the spanning trees of a Jahangir graph $J_{n,m}$.
\begin{remark} \rm{
The number of common edges among the cycles $C_{n+2}$ of the graph $J_{n,m}$ that a spanning tree can present is a positive integer not greater than $m$. There cannot exist spanning trees without any common edge because, being the only edges of the graph connected to the central vertex, this would be isolated.
}
\end{remark}
\noindent So, the problem of determining all the spanning trees of a Jahangir graph $J_{n,m}$ can be decomposed into subproblems through a classification of the spanning trees on the ground of the number of common edges between the cycles $C_{n+2}$ they have.\\
Let $n,m \in \NN $ be fixed, $n\geq2$ and $m\geq 3$.\\
Let's decompose the problem to compute $\sigma(J_{n,m})$ by calculating the spanning trees with the same number $k\in \NN$ of common edges, $1\leq k\leq m$.\\
It is possible to arrange $k$ common edges of $J_{n,m}$ in $\alpha=$ $m \choose k$ distinct manners, but different types of spanning trees could be generated for 
the choice of $k$ edges from the $m$ edges that are common to two cycles. To this end we must classify the $\alpha$ sets of $k$ indices in equivalence classes depending on the sequential structure of the common edges and count such classes.\\
Let's present the instructions to compute $\sigma(J_{n,m})$.\\[1mm]
Assign $n,m,k$ and consider $\alpha=$ $m \choose k$.\\
Build the $\alpha\! \times\! k$ matrix of the combinations of $m$ positive integers in sets of $k$
\[
B=\left( \begin{array}{cccccc}
1 & 2 & \dots & k-2 & k-1 & k\\
\vdots & & & & & \\
1 & m-k+2 & \dots & m-2 & m-1 & m\\
2 & 3 & \dots & k-1 & k & k+1\\
\vdots & & & & & \\
2 & m-k+2 & \dots & m-2 & m-1 & m\\
\vdots & & & & & \\
m-k+1 & m-k+2 & \dots & m-2 & m-1 & m
\end{array} \right) \,.
\]
The problem moves in examining what rows of the matrix are equivalent to each other in the above sense and how many groups of equivalent rows exist.\\
In particular, the rows $(1 \;\; 2 \, \dots \, k), \; (2 \;\; 3 \, \dots \, k\!+\!1), \; (3 \;\; 4 \, \dots \, k\!+\!2),\, \dots,\, $ $(m\!-\!k\!+\!1 \;\; m\!-\!k\!+\!2 \, \dots \, m)$ 
are equivalent, then they represent an equivalence class of spanning trees with $k$ common edges.\\
Starting from $B$, for finding all the equivalence classes it is possible to generate another $\alpha\! \times\! k$ matrix $C$ which take in account the mutual dispositions of the $k$ common edges.\\
The entries in $C$ are determined as follows: transform any row $B[i]=(b_{i1} \;\; b_{i2} \dots b_{ik})$ of $B$ into the row $C[i]=(\gamma_{i1} \;\; \gamma_{i2} \dots \gamma_{ik})$ of $C, \, i=1,\dots,\alpha$, where $\gamma_{ij}= b_{i,\,j+1}-b_{ij}-1,\, \forall \,j=1,\dots, k\!-\!1$\,;\, $\gamma_{ik}=b_{i1}-b_{ik}-1+m$ and, in addition, make a permutation of the entries of $C[i]$ in increasing order, so that lastly we will indicate such entries as $c_{i1}, c_{i2}, \dots, c_{ik}$, respectively.\\
Therefore the following relation is established:\vspace{1mm}\par
rows in $B$ are equivalent $\Longleftrightarrow$ the corresponding rows in $C$ are equal.\\[1mm]
Consequently, the number of equivalence classes of the spanning trees with $k$ common edges in $J_{n,m}$ is the number of distinct rows of $C$.\\
Let $h$ be the number of distinct rows in $C$ and suppose that the row $C[i]=(c_{i1} \;\; c_{i2} \dots c_{ik})$ of $C$ repeats itself $x_t$ times, for $t=1,\dots, h$.\\
The contribution of the spanning trees related to $C[i]$ is:\vspace{1mm}\par
\hspace{2,8cm} $s_t=n^k\,(c_{i1}+1)(c_{i2}+1)\cdots(c_{ik}+1)$.\\[1mm]
Finally, the number of the spanning trees of $J_{n,m}$ with $k$ common edges is:\vspace{1mm}\par
\hspace{3,8cm} $\sigma(J_{n,m})_k=\sum_{t=1}^h s_t\,x_t$.\\[1mm]
By iterating the process for $k=1,\dots, m$, we obtain\, $\sigma(J_{n,m})$.
\begin{example}\label{ex1}\rm{
Let's apply the above algorithm to the Jahangir graph $J_{2,4}$\,.
\vspace{-6mm}\par\noindent
\begin{figure}[htbp]
\begin{center}
\includegraphics[width=5cm, height=2.5cm]{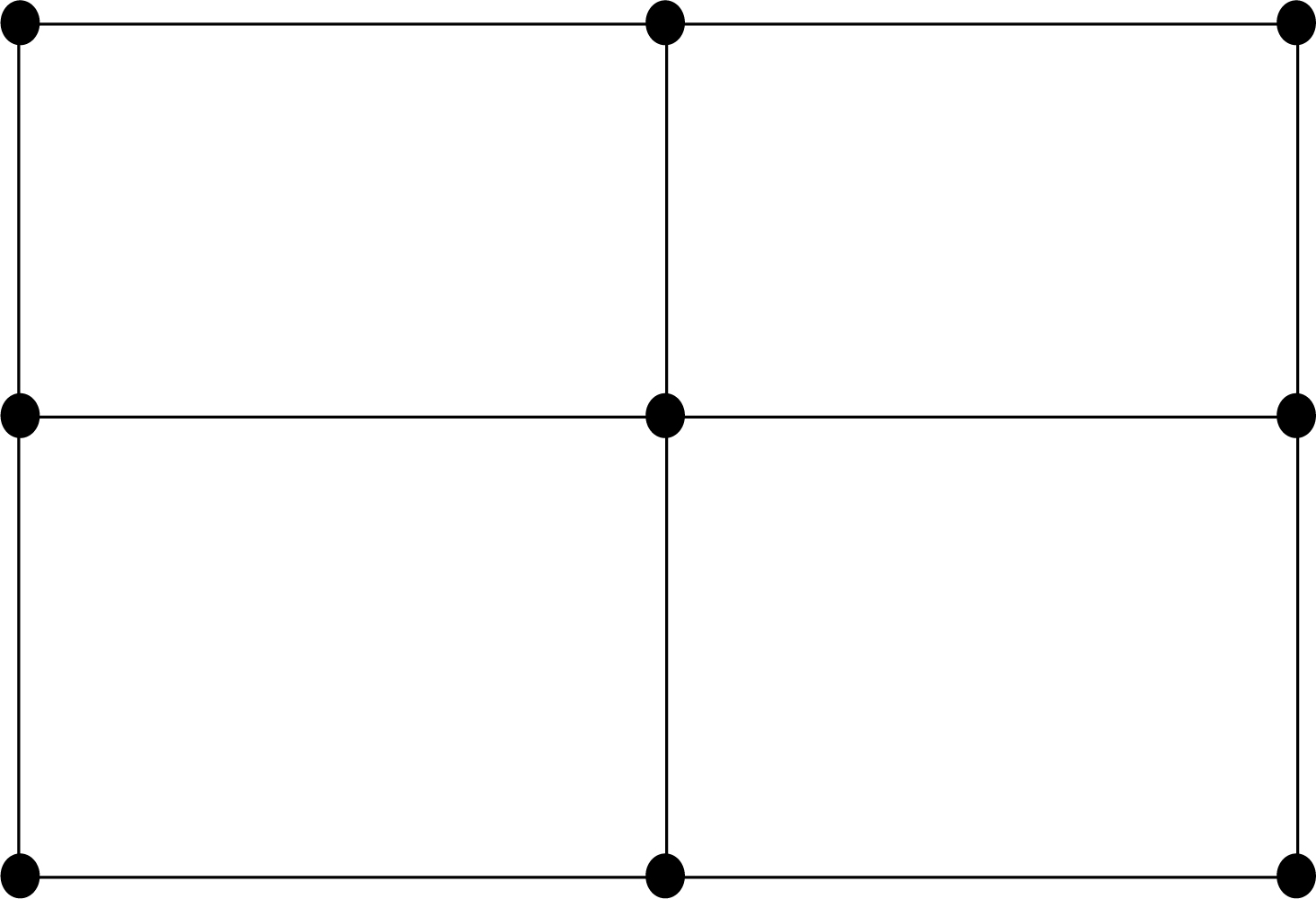}
\caption{Graph $J_{2,4}$}
\end{center}
\end{figure}
\vspace{-6mm}\par\noindent
We have to determine $\sigma(J_{2,4})_{k}$, for $k=1,2,3,4$.\\[1mm]
Let $k=1$.\\
By the cutting-down method, it easily results: \,$4\,(2\cdot 4)=32$\,.\\[1mm]
Let $k=2$.\\
Let $B$ be the $4 \choose 2$ $\!\!\times 2$ matrix \,$\left( \begin{array}{cc}
1 & 2 \\
1 & 3 \\
1 & 4 \\
2 & 3 \\
2 & 4 \\
3 & 4
\end{array} \right)$; the matrix $C$ will be \,$\left( \begin{array}{cc}
0 & 2 \\
1 & 1 \\
0 & 2 \\
0 & 2 \\
1 & 1 \\
0 & 2
\end{array} \right)$\,.\\
The number of equivalence classes of the spanning trees with two common edges in $J_{2,4}$ is 2, that is the number of distinct rows of $C$. In particular, the row $C[1]=(0 \;\; 2)$ repeats itself four times, the row $C[2]=(1 \;\; 1)$ repeats itself twice.\\
The contribution of the spanning trees related to $C[1]$ is \, $s_1=2^2\cdot 1\cdot 3=12$\,; \, that one related to $C[2]$ is \, $s_2=2^2\cdot 2\cdot 2=16$\,. Therefore the number of the spanning trees of $J_{2,4}$ with two common edges is\, $\sigma(J_{2,4})_2=4\cdot 12+2\cdot 16=80$.\\[1mm]
Let $k=3$.\\
Let $B$ be the $4 \choose 3$ $\!\!\times 2$ matrix $\left( \begin{array}{ccc}
1 & 2 & 3\\
1 & 2 & 4\\
1 & 3 & 4\\
2 & 3 & 4
\end{array} \right)$; the matrix $C$ will be $\left( \begin{array}{ccc}
0 & 0 & 1\\
0 & 0 & 1\\
0 & 0 & 1\\
0 & 0 & 1
\end{array} \right)$\,.\\
The 4 rows of $C$ are the same, so the contribution of the spanning trees is \,$2^3\cdot 1\cdot 1\cdot 2=16$\, and $\sigma(J_{2,4})_3=4\cdot 16=64$.\\[1mm]
Let $k=4$.\\
This case is trivial; in fact, $B$ is the row matrix \,$(1 \;\; 2 \;\; 3 \;\; 4)$\, and $C$ the row matrix $(0 \;\; 0 \;\; 0 \;\; 0)$. Consequently\, $\sigma(J_{2,4})_4=2^4=16$.\\[2mm]
In conclusion, $\sigma(J_{2,4})=32+80+64+16=192$\,.
}
\end{example}
\noindent The strength of the algorithmic description introduced for the calculation of spanning trees of any Jahangir graph lies in the fact that with simple operations between integers we can model the totality of dispositions which assume the common edges of the inner cycles $C_{n+2}$ of the graph.\\
Thus we have examined an alternative method which, for this type of graphs, qualitatively and quantitatively solves the problem of determining the spanning trees. It is significant to observe that, by applying the Matrix Tree Theorem, only the total number of such trees is determined, indeed through this algorithm are also found all the possible graphs which originate from the given graph after applying the cutting-down method.
\vspace{6mm}\par\noindent
{\Large \textbf{The source code of the algorithm}}\\[2.5mm]
Following the Example \ref{ex1}, we execute the algorithm for the computation of $\sigma(J_{n,m})_k$ in programming language $C$\,.\\[3mm]
{\small
$\sharp$include $<$stdio.h$>$\\
$\sharp$include $<$stdlib.h$>$\\
$\sharp$include $<$math.h$>$\vspace{2mm}\par
\noindent int** A;\\
int row = 0;\\
int binomial(int m, int k);\\
int* bubbleSort(int* A, int n);\\
void combination(int arr[\,],int n,int r,int index,int data[\,],int i);\\
void createA(int arr[\,], int n, int r);\vspace{2mm}\par
\noindent int main(\,) \{\par
\hspace{3mm}	int n = 2;\par
\hspace{3mm}	int m = 12;\par
\hspace{3mm}	int k = 7;\vspace{2mm}\par
\hspace{3mm}	int i, j, t;\par
\hspace{3mm}	int flag;\vspace{2mm}\par
\hspace{-1mm}	int positions[m];\par
\hspace{-1mm}    for (i = 0; i $<$ m; i++) positions[i] = i+1;\vspace{2mm}\par
\hspace{-1mm}    int alpha = binomial(m,k);\vspace{2mm}\par
\hspace{-1mm}    A = (int **)malloc(alpha * sizeof(int*));\par
\hspace{3mm}	for(i = 0; i $<$ alpha; i++) A[i] = (int *)malloc(k * sizeof(int));\vspace{2mm}\par
\hspace{3mm}	int **G = (int **)malloc(alpha * sizeof(int*));\par
\hspace{3mm}	for(i = 0; i $<$ alpha; i++) G[i] = (int *)malloc(k * sizeof(int));\vspace{2mm}\par
\hspace{3mm}	int **G\_s = (int **)malloc(alpha * sizeof(int*));\par
\hspace{3mm}	for(i = 0; i $<$ alpha; i++) G\_s[i] = (int *)malloc(k * sizeof(int));\vspace{2mm}\par
\hspace{3mm}	int c[alpha];\vspace{2mm}\par
\hspace{3mm}	unsigned long long int numSpanningTree = 0;\vspace{2mm}\par
\hspace{-1mm}    createA(positions, m, k);\vspace{2mm}\par
\hspace{3mm}	for (i = 0; i $<$ alpha; i++) \{\par
	\hspace{5mm}	for (j = 0; j $<$ k; j++) \{\par
	\hspace{7mm}		if (j != k$-$1) \{\par
	\hspace{9mm}			G[i][j] = A[i][j+1] $-$ A[i][j] $-$ 1;\par
	\hspace{7mm}		\} else \{\par
	\hspace{9mm}			G[i][j] = A[i][0] $-$ A[i][j] + m $-$ 1;\par
	\hspace{7mm}		\}\par
	\hspace{5mm}	\}\par
	\hspace{3mm}\}\vspace{2mm}\par
	\hspace{3mm}for (i = 0; i $<$ alpha; i++) {\par
	\hspace{5mm}	G\_s[i] = bubbleSort(G[i], k);\par
	\}\vspace{2mm}\par
	\hspace{3mm}struct graphStruct \{\par
	\hspace{5mm}	int g[k];\par
	\hspace{5mm}	int beta;\par
	\};\vspace{2mm}\par
	\hspace{3mm}struct graphStruct *graph;\par
	\hspace{3mm}graph = malloc(alpha*sizeof(struct graphStruct));\par
	\hspace{3mm}for (i = 0; i $<$ alpha; i++) graph[i].beta = 0;\vspace{2mm}\par
	\hspace{3mm}for (i = 0; i $<$ alpha; i++) \{\par
	\hspace{5mm}	if (i == 0) \{\par
	\hspace{7mm}		for (j = 0; j $<$ k; j++) \{\par
	\hspace{9mm}			graph[i].g[j] = G\_s[i][j];\par
	\hspace{7mm}		\}\par
	\hspace{7mm}		graph[i].beta++;\par
	\hspace{5mm}	\} else \{\par
	\hspace{7mm}		for (t = 0; t $<$ i; t++) \{\par
	\hspace{9mm}			for (j = 0; j $<$ k; j++) \{\par
	\hspace{11mm}				if (G\_s[i][j] == graph[t].g[j]) \{\par
	\hspace{13mm}					flag = 1;\par
	\hspace{11mm}				\} else \{\par
	\hspace{13mm}					flag = 0;\par
	\hspace{13mm}					break;\par
	\hspace{11mm}				\}\par
	\hspace{9mm}			\}\par
	\hspace{9mm}			if (flag == 1) \{\par
	\hspace{11mm}				graph[t].beta++;\par
	\hspace{11mm}				break;\par
	\hspace{9mm}			\}\par
	\hspace{7mm}		\}\par
	\hspace{7mm}		if (flag == 0) \{\par
	\hspace{9mm}			for (j = 0; j $<$ k; j++) \{\par
	\hspace{11mm}				graph[i].g[j] = G\_s[i][j];\par
	\hspace{9mm}			\}\par
	\hspace{9mm}			graph[i].beta++;\par
	\hspace{7mm}		\}\par
	\hspace{5mm}	\}\par
	\}\vspace{2mm}\par
\hspace{3mm}	for(i = 0; i $<$ alpha; i++) \{\par
	\hspace{5mm}	if (graph[i].beta != 0) \{\par
	\hspace{7mm}		c[i] = pow(n, k);\par
	\hspace{7mm}		for (j = 0; j $<$ k; j++) \{\par
	\hspace{9mm}			c[i] *= graph[i].g[j] + 1;\par
	\hspace{7mm}		\}\par
	\hspace{5mm}	\}\par
	\}\vspace{2mm}\par
\hspace{3mm}	for(i = 0; i $<$ alpha; i++) \{\par
	\hspace{5mm}	if (graph[i].beta != 0) \{\par
	\hspace{7mm}		numSpanningTree += c[i]*graph[i].beta;\par
	\hspace{5mm}	\}\par
	\}\vspace{2mm}\par
\hspace{3mm}	printf(''SPANNING TREES of J(\%d, \%d) with k = \%d common edges: \%llu$\backslash$n'', n, m, k, numSpanningTree);\par
\hspace{3mm}    return 0;\\
\}\vspace{2mm}\par
\noindent int binomial(int n, int k) \{\par
 \hspace{-1mm}  if (k==0 $\mid \, \mid$ k==n) return 1;\par
 \hspace{-1mm}  return  binomial(n$-$1, k$-$1) + binomial(n$-$1, k);\\
\}\vspace{2mm}\par
\noindent int* bubbleSort(int* A, int n) \{\par
\hspace{3mm}	int i, j, temp;\par
\hspace{3mm}	for (i = 0; i $<$ n$-$1; i++) \{\par
	\hspace{5mm}	for (j = 0; j $<$ n$-$1; j++) \{\par
	\hspace{7mm}		if(A[j] $>$ A[j+1]) \{\par
	\hspace{9mm}			temp = A[j+1];\par
	\hspace{9mm}			A[j+1] = A[j];\par
	\hspace{9mm}			A[j] = temp;\par
	\hspace{7mm}		\}\par
	\hspace{5mm}	\}\par
\hspace{3mm}	\}\par
\hspace{3mm}	return A;\\
\}\vspace{2mm}\par
\noindent void combination(int arr[\,], int n, int r, int index, int data[\,], int i) \{\par
\hspace{-1mm}    if (index == r) \{\par
    \hspace{3mm}	int j;\par
    \hspace{3mm}    for (j=0; j $<$ r; j++) \{\par
    \hspace{5mm}    	A[row][j] = data[j];\par
	\hspace{5mm}	\}\par
	\hspace{5mm}	row++;\par
    \hspace{3mm}    return;\par
\hspace{-1mm}    \}\vspace{2mm}\par
\hspace{-1mm}    if (i $>$= n) return;\vspace{2mm}\par
\hspace{-1mm}    data[index] = arr[i];\par
\hspace{-1mm}    combination(arr, n, r, index+1, data, i+1);\par
\hspace{-1mm}    combination(arr, n, r, index, data, i+1);\\
\}\vspace{2mm}\par
\noindent void createA(int arr[\,], int n, int r) \{\par
\hspace{-1mm}    int data[r];\par
\hspace{-1mm}    combination(arr, n, r, 0, data, 0);\\
\}
}
\smallskip
\section{An application on secure data transmission}
{\normalsize Commutative and computational algebra, algebraic graph theory and
combinatorics could help applied science to give solutions on
real problems concerning different fields in order to translate or evaluate theoretic results into concrete realities.\\
Algebraic and geometric models are becoming increasingly
significant because they are actively directed to other areas of sciences and
technology, specifically in physical, medical, statistical sectors, engineering, computer science, and so
on. For example, models of this type are substantially given by
graphs and matrices in algebraic systems.\\
Modeling through graphs are principally used to represent processes in which there are
sets with relations between the elements; in particular, graphs
can be employed to analyze connection problems and are basically considered in contexts such as: telecommunication systems, interchange networks, transportation optimal plan of indivisible goods, microwave engineering in resonant structures, coding theory, data organization, flows of computation, research algorithms for the web, security to encrypt messages or to transmit confidential information.
It is also possible to investigate classes of simple graphs in order to
introduce useful procedures for setting suitable solutions of
several examples around the fields of military security and of urban
and territorial analysis.\\
In this section we give a practical application of the
combinatorial properties of the spanning trees of Jahangir graphs
studied along the paper. More precisely, we consider the technique
developed to determine how many and what are the spanning trees of
this class of graphs and explain how the method introduced
for computing the number of the spanning trees, is a good
instrument for optimizing the transmission of confidential information.\\
The application concerns a procedure of data transmission arising from
real security problems. It is necessary to communicate the type of
arming situated inside on some nuclear sites in a country. The
suitable model to represent this situation is a connected graph.
The nuclear sites are located and we can represent them
through the vertex set of the above considered simple connected
graph $G$:
\vspace{-.2mm}
\begin{figure}[htbp]
   \begin{center}
      \includegraphics[width=0.35\textwidth]{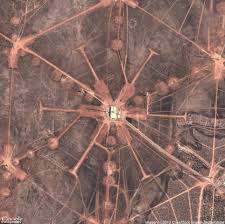}
 \end{center}
\end{figure}
\vspace{-.4cm} \par\noindent We may build all subtrees $G_i$ of
$G$ that contain each vertex of $G$\,. The role
of the supporting graphs $G_i$ is 
fundamental 
in transmitting protected data.
\vspace{-2mm}
\begin{figure}[ht]
   \begin{center}
      \includegraphics[width=0.59\textwidth]{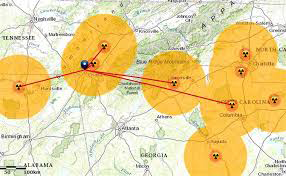}
 \end{center}
\end{figure}
\vspace{-.4cm} \par\noindent The nuclear sites are known and represented
by the finite vertex set of $G$. The arming is classified
through the vertex set of unknown graphs $G_i$. The
message to be sent is the graph $G$ and each
$G_i$ contains the real meaning of the message because its
edges give the connection between the nuclear sites and their
arming. The receiver will get acquainted with the type of arming
placed in every site applying the algebraic procedure to build $G_i$\,.\\
In our case, $G$ is the Jahangir graph $J_{n,m}$. We
associate to $G$ the subgraphs $G_i$, that are the spanning trees of it,
computed through the described method. Hence,
we can represent the nuclear sites through the vertex set of the
connected graph $J_{n,m}$. The arming in every nuclear site can be
classified through the vertex set of another undisclosed graph
$G_i$. In fact the transmitter sends in his message the
drawing of the graph $J_{n,m}$. The receiver will elaborate the
potential information contained in it computing the spanning trees
$G_i$\,.\\
For instance, for $J_{2,3}$ the computation gives the
following spanning trees
\begin{figure}[htbp]
   \begin{center}
      \includegraphics[width=0.66\textwidth]{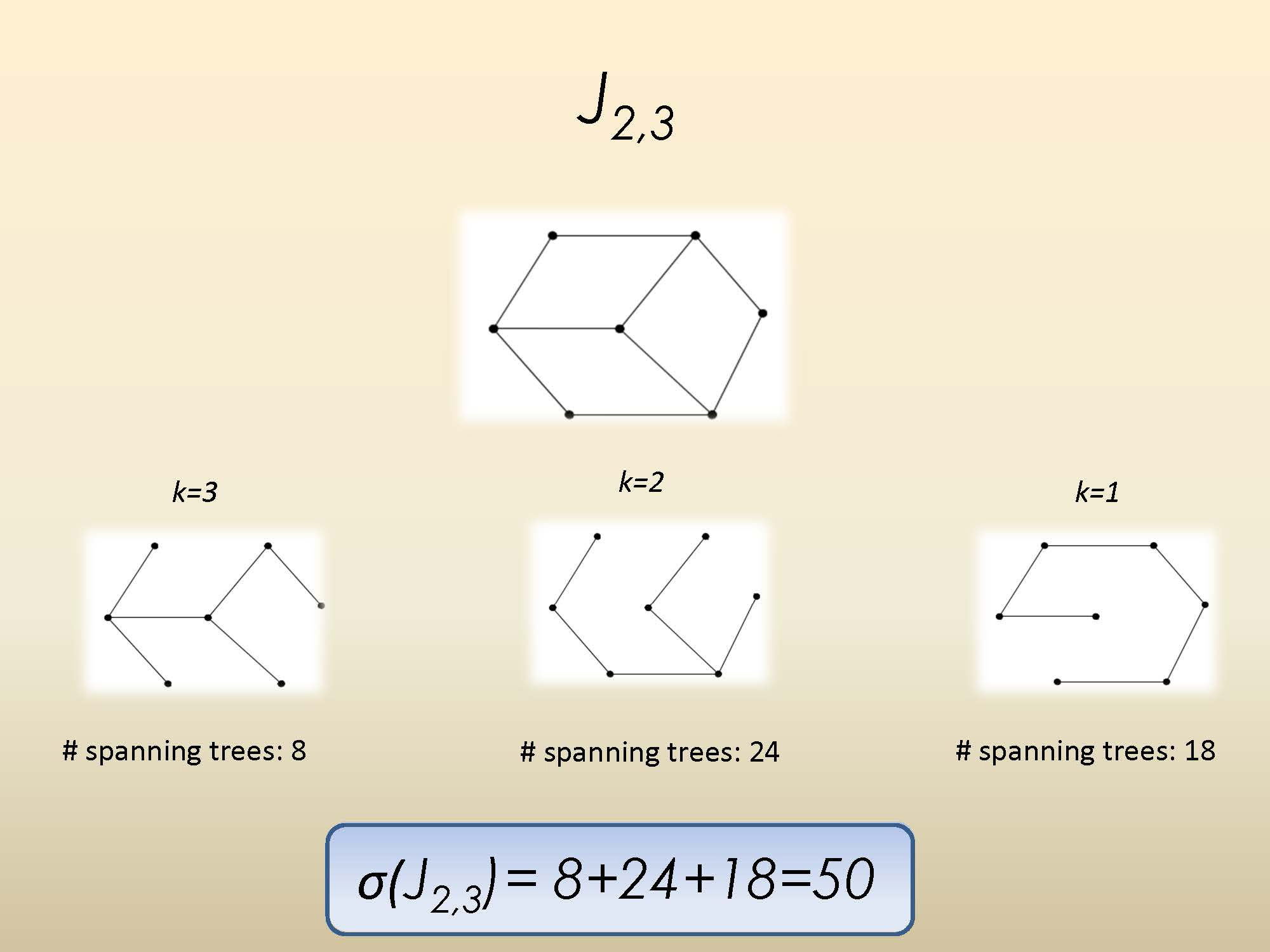}
 \end{center}
\end{figure}
\vspace{-.4cm} \par\noindent The graphs $G_i$, varying the number of common edges $k$ in them, contain the real
meaning of the message because their edges give the connection between the nuclear sites and their arming.
\begin{figure}[htbp]
   \begin{center}
      \includegraphics[width=0.70\textwidth]{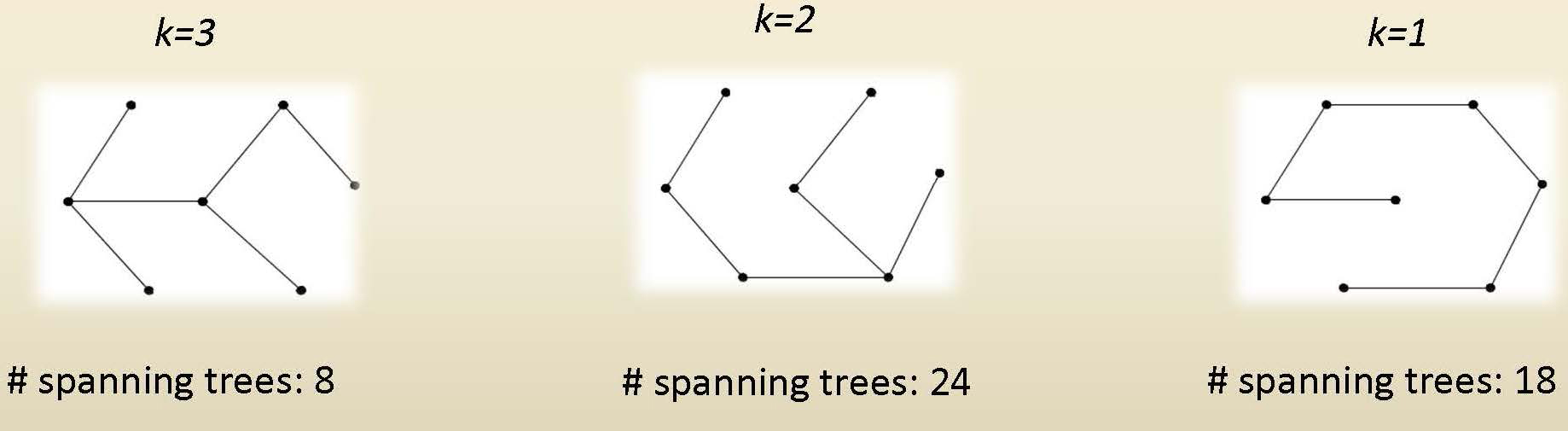}
 \end{center}
\end{figure}
\vspace{-.4cm} \par\noindent So the receiver will get acquainted with the type of arming placed
in each site.

\smallskip
\section{Structure relationships}
Let's study another aspect of the class of Jahangir graphs $ J_{n,m}$ in relation to the calculation of the spanning trees of them.\\
We will want to locate the general structure of the class of Jahangir graphs. Recall that in any Jahangir graph $J_{n,m}$ we can intend the index $n$ as the number of edges that each inner cycle of the graph does not share with all the other ones, and the index $m$ as the number of inner cycles that make up the graph.\\
Considering the totality of Jahangir graphs $J_{n,m}$, it is interesting to study those which have either the same index $n$ or the same index $m$.\\
Taken for example the Jahangir graphs for which $m=3$ and calculating the spanning trees on the ground of the number of common edges which they present, we can write:\vspace{1.5mm}\par
$\sigma(J_{n,3}) = \sigma(J_{n,3})_3 + \sigma(J_{n,3})_2 + \sigma(J_{n,3})_1 = n^3+ 3(2n)n^{3-2} + 3(3n)$
.\\[1.5mm]
Now introduce a statement that shows the relationship between the spanning trees related to two Jahangir graphs whose first indices are consecutive numbers.
\begin{theorem} Let $n, m>2$. Then \,$\displaystyle{\lim_{n\to \infty} \frac{\sigma(J_{n,m})}{\sigma(J_{n-1,m})}=1}$.
\end{theorem}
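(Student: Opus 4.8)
The plan is to prove the stronger \emph{algebraic} fact that, for each fixed $m>2$, the map $n\mapsto\sigma(J_{n,m})$ is a \emph{monic polynomial in $n$ of degree exactly $m$}. Once this is established the theorem is immediate: $\sigma(J_{n-1,m})$ is the same polynomial evaluated at $n-1$, hence also monic of degree $m$, and the ratio of two monic polynomials of the same degree, evaluated at $n$ and at $n-1$, tends to $1$ as $n\to\infty$.

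The first step is to read off from the algorithm of Section~2 that the decomposition $\sigma(J_{n,m})=\sum_{k=1}^{m}\sigma(J_{n,m})_k$ cleanly separates the dependence on $n$ from the dependence on $m$. For fixed $k$, the matrix $B$ of the ${m\choose k}$ combinations, the derived matrix $C$, the number $h$ of its distinct rows, the multiplicities $x_t$, and the products $(c_{i1}+1)\cdots(c_{ik}+1)$ are all built without ever referring to $n$; the parameter $n$ enters only through the factor $n^{k}$ in $s_t=n^{k}(c_{i1}+1)\cdots(c_{ik}+1)$. Therefore, setting $a_{m,k}=\sum_{t=1}^{h}x_t\,(c_{i1}+1)\cdots(c_{ik}+1)$, which is a positive integer depending on $m$ and $k$ only, we get $\sigma(J_{n,m})_k=a_{m,k}\,n^{k}$, and hence $\sigma(J_{n,m})=\sum_{k=1}^{m}a_{m,k}\,n^{k}$, a polynomial in $n$ of degree at most $m$.

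Next I would pin down the leading coefficient. The case $k=m$ is the trivial one noted in Example~\ref{ex1}: $B$ is the single row $(1\;2\;\cdots\;m)$, so $C$ is the single row $(0\;0\;\cdots\;0)$, with $h=1$, $x_1=1$, and contribution exactly $n^{m}$; thus $a_{m,m}=1$. Hence $\sigma(J_{n,m})$ is monic of degree exactly $m$. (As a check this gives $\sigma(J_{n,3})=n^{3}+6n^{2}+9n$ and $\sigma(J_{n,4})=n^{4}+8n^{3}+20n^{2}+16n$, consistent with the values $\sigma(J_{2,3})$ and $\sigma(J_{2,4})=192$ computed earlier.) Consequently
\[
\frac{\sigma(J_{n,m})}{\sigma(J_{n-1,m})}=\frac{n^{m}+a_{m,m-1}n^{m-1}+\cdots}{(n-1)^{m}+a_{m,m-1}(n-1)^{m-1}+\cdots}\;\longrightarrow\;1,\qquad n\to\infty.
\]

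The one point that genuinely needs care — and where I expect to spend most of the writing — is the justification that the combinatorial data $(h,\{x_t\},\{c_{ij}\})$ returned by the algorithm are truly independent of $n$. This is intuitively clear, since $B$, $C$ and the counting of equivalence classes never mention $n$ and the whole $n$-dependence is packaged in the explicit factor $n^{k}$ in $s_t$; but it should be isolated as a short lemma (``$\sigma(J_{n,m})_k=a_{m,k}n^{k}$ with $a_{m,k}$ depending only on $m,k$'') and proved before invoking the polynomial identity, after which the limit computation above is routine.
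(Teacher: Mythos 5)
Your proposal is correct and follows essentially the same route as the paper: the paper's proof also rests on writing $\sigma(J_{n,m})=\sum_{k=1}^{m}\sigma(J_{n,m})_k=n^{m}+\sum_{i=1}^{m-1}c_i n^{i}$ as a monic degree-$m$ polynomial in $n$ and concluding that the ratio of its values at $n$ and $n-1$ tends to $1$. The only difference is that you spell out, via the algorithm of Section~2, why each $\sigma(J_{n,m})_k$ has the form $a_{m,k}n^{k}$ with $a_{m,k}$ independent of $n$ and $a_{m,m}=1$ — a justification the paper asserts without detail — which is a welcome but not essentially different addition.
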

\begin{proof}
For a fixed positive integer $m >2$ it results that:
$$\sigma(J_{n,m})=\sum_{k=1}^m \sigma(J_{n,m})_k =n^m+ \sum_{i=1}^{m-1} c_i n^i, \, \mathrm{for\,\,} c_i \in \RR\,.$$
Consequently \,$\displaystyle{\lim_{n\to \infty} \frac{\sigma(J_{n,m})}{\sigma(J_{n-1,m})}=\lim_{n\to \infty} \frac{n^m+\sum_{i=1}^{m-1} c_i n^i}{(n-1)^m+ \sum_{i=1}^{m-1} c_i (n-1)^i}=1.}$
\end{proof}

\noindent Observe that the theorem holds even for two non-consecutive values of the first indices.\\
The previous result emphasizes the geometric and analytical aspects of the problem: in fact, it appears that the number of spanning trees of Jahangir graphs having the same second index and the first one tending to infinity, tends to be a constant.\\
It may be noted, explaining the calculation, that the previous function is decreasing: geometrically this means that the number of spanning trees of Jahangir graphs $J_{n,m}$\,, with $m$ constant and $n$ large enough, turns out to be equal; in other words, fixing the number of edges connected to the center, it is as if we were tending a polygon to a circumference by increasing its edges.\\
This leads to the conclusion that, for $n$ sufficiently large, $\sigma(J_{n,m})$ and $\sigma(J_{n-1,m})$ tend to assume values very close to each other.\\
On the other hand, doing a similar study on the increasing of the second index of a Jahangir graph $J_{n,m}$ fixed the first one, it results that the ratio between the number of spanning trees related to two Jahangir graphs whose second indices are consecutive, for $m$ tending to infinity, seems to approximate to a constant, different for each choice of $n$.\\
First, let's indicate the behaviour of the spanning trees of the Jahangir graphs $J_{2,m}$ and $J_{3,m}$, for $m\geq 3$.\\[-2mm]\\
\begin{center}
\begin{tabular}{|l|l|l|}
  \hline
$m$ & $\sigma(J_{2,m})$ & $\sigma(J_{3,m})$  \\
  \hline
3 & 50           & 108             \\
4 & 192          & 525             \\
5 & 722          & 2523            \\
6 & 2700         & 12096           \\
7 & 10082        & 57963           \\
8 & 37632        & 277725          \\
9 & 140450       & 1330668         \\
10& 524172       & 6375621         \\
11& 1956242      & 30547443        \\
12& 7300800      & 146361600       \\
13& 27246962     & 701260563       \\
14& 101687052    & 3359941221      \\
15& 379501250    & 16098445548     \\
16& 1416317952   & 77132286525     \\
\dots & \dots\dots\dots & \dots\dots\dots \\
& & \\

\hline
\end{tabular}
\end{center}
\vspace{2mm}
\noindent
Known such information, now we wonder if it is possible to compute $\sigma(J_{2,m})$ or $\sigma(J_{3,m})$, for any $m$, namely in general if it is possible to identify a relationship among the number of spanning trees of $\sigma(J_{n,m})$, when $n\geq 2$ is assigned.
To this end we introduce the sequences:\vspace{2mm}\par
\hspace{2cm} $\displaystyle{a_{2,m}= \frac{\sigma(J_{2,m+1})}{\sigma(J_{2,m})}},$\, for $m\in \NN, \, m\geq3\,,$\vspace{1mm}\par
\hspace{2cm} $\displaystyle{a_{3,m}= \frac{\sigma(J_{3,m+1})}{\sigma(J_{3,m})}},$\, for $m\in \NN, \, m\geq3\,.$ \vspace{2mm}
\par\noindent
Building sequences of the mentioned type, for any fixed integer $n\geq 2$, that is, considering a countable infinity of sequences, it does highlight a symmetry that can already be grasped visually. Formalizing a relation between these numbers, let's analyze the way in which the problem numerically evolves.\\
The values of the first terms of the two sequences introduced above, for all $m$, are the following:\\[-2mm]
\begin{center}
\begin{tabular}{|l|l|l|}
  \hline
$m$ & $a_{2,m}$ & $a_{3,m}$\\
  \hline
3 & 3.84 & 4.86 \\
4 & 3.7604 & 4.8057 \\
5 & 3.7396 & 4.7943 \\
6 & 3.7340 & 4.7919 \\
7 & 3.732593 & 4.791418 \\
8 & 3.732196 & 4.791315 \\
9 & 3.7320897 & 4.7912935 \\
10& 3.7320612 & 4.7912890 \\
11& 3.732053600 & 4.7912880 \\
12& 3.732051556 & 4.791287899 \\
13& 3.732051008 & 4.791287858 \\
14& 3.732050861 & 4.7912878497 \\
15& 3.732050822 & 4.7912878479 \\
\dots & \dots\dots\dots & \dots\dots\dots \\
& & \\
\hline
\end{tabular}
\end{center}
\vspace{2mm}
\noindent
It is evident that, for any fixed $n \geq 2$ and for any variation of the second index $m$, the number of spanning trees has a linear increase.
\begin{proposition}
When $n=2$, the sequence $\,\displaystyle{a_{2,m}= \frac{\sigma(J_{2,m+1})}{\sigma(J_{2,m})}}$,\, for $m\to +\infty$, tends to $\,\delta_2=2\!+\!\sqrt{3}$.
\end{proposition}
\begin{proof}
We point out that the integer sequence $\sigma(J_{2,m})$ is given by the formula $\,-((2\!+\!\sqrt{3})^m-1)\,((2\!-\!\sqrt{3})^m-1)$\,, hence\\[2mm]
\begin{tabular}{ll}
$\displaystyle{\lim_{m\to +\infty} a_{2,m}}\!\!$ &$= \displaystyle{\lim_{m\rightarrow +\infty} \frac{((2\!+\!\sqrt{3})^{m+1}-1)\,((2\!-\!\sqrt{3})^{m+1}-1)}{((2\!+\!\sqrt{3})^m-1)\,((2\!-\!\sqrt{3})^m-1)}}$\\
&$= \displaystyle{\lim_{m\to +\infty} \frac{(2\!+\!\sqrt{3})^{m+1}-1}{(2\!+\!\sqrt{3})^m-1}}$\\
&$= \displaystyle{\lim_{m\to +\infty} \frac{(2\!+\!\sqrt{3})^{m+1}}{(2\!+\!\sqrt{3})^m-1}}=2\!+\!\sqrt{3}$\,.
\end{tabular}

\vspace{-5mm}
\end{proof}
\vspace{1mm}
\noindent
We note that when $n=3$, even taking into account the previous table, for the sequence $\,\displaystyle{a_{3,m}= \frac{\sigma(J_{3,m+1})}{\sigma(J_{3,m})}}$,\, it results \,
$\displaystyle{\lim_{m\to +\infty} a_{3,m}}=\delta_{3}\approx 4.791287... \; . $\\[2mm]
And so on, for any increase of $n$.\\[2mm]
More generally, we can state the following \vspace{-.5mm}
\begin{theorem}
Let $n,m \in \NN$ and $n\geq 2$ fixed. Let $a_{n,m}= \displaystyle{\frac{\sigma(J_{n,m+1})}{\sigma(J_{n,m})}}\,.$ Then \par
\hspace{3cm} $\displaystyle{\lim_{m\to +\infty} a_{n,m}}=\delta_{n}$, \, where\, $\delta_{n}\in \RR$\,.\\[2mm]
Moreover, for $m>3$, it is:\par
\hspace{3,7cm} $\sigma(J_{n,m})>(\delta_{n})^{m-3}\,\sigma(J_{n,3})\,,$\\[2mm]
and, when $m$ tends to infinity, \,$\sigma(J_{n,m})$ approximates \,$(\delta_{n})^{m-3}\,\sigma(J_{n,3})\,.\quad \Box$
\end{theorem}

\vspace{5mm}\par\noindent
{\Large \textbf{Acknowledgements}}\\[2.5mm]
The research that led to the present paper was partially supported by a grant of the group GNSAGA of INdAM, Italy.
\smallskip

}
\end{document}